\newtheorem{lemma}{Lemma}
\newtheorem{theorem}{Theorem}[section]
\newtheorem{corollary}{Corollary}
\newtheorem{definition}{Definition}
\newtheorem{remark}{Remark}
\begin{document}

\title{SOME IDENTITIES AND INEQUALITIES FOR G-FUSION FRAMES}

\author[R. Zarghami Farfar]{Ramazan Zarghami Farfar}
\address{Dapartement of Geomatic and Mathematical\\Marand Faculty of Technical and Engineering\\University of Tabriz\\, Iran\\}
\email{zarghamir@gmail.com}

\author[V. Sadri]{Vahid Sadri}
\address{Department of Mathematics, Faculty of Tabriz  Branch\\ Technical and Vocational University (TUV), East Azarbaijan
, Iran}
\email{vahidsadri57@gmail.com}

\author[R. Ahmadi]{Reza Ahmadi}
\address{Institute of Fundamental Sciences\\University of Tabriz\\, Iran\\}
\email{rahmadi@tabrizu.ac.ir}

\begin{abstract}
G-fusion frames which are obtained from the combination of g-frames and fusion frames were recently introduced for Hilbert spaces. In this paper, we present a new identity for g-frames which was given by Najati in a special case. Also, by using the idea of this identity and their dual frames, some equlities and inequalities will be presented for g-fusion frames.
\end{abstract}

\subjclass[2010]{Primary 42C15; Secondary 46C99, 41A58}

\keywords{g-frame, dual g-frame, g-fusion frame, dual g-fusion frame.}

\maketitle

\section{Introduction}
Recent developments in frame theory and their applications are the result of some mathematician's efforts in this topic (see \cite{ds}, \cite{ga}, \cite{fe}, \cite{aklr}, \cite{ck} and \cite{ch}). By more than half a century, this theory has a interesting applications in different branches of science such as the filter bank theory, signal and image processing, wireless communications, atomic systems and Kadison-Singer problem. In 2005, Balan, Casazza and others  were able to find some useful identities in the frames and fusion frames by studying the properties of the Parseval frames \cite{balan}.
In \cite{sad}, a special kind of frames is introduced which are called g-fusion frames that are the combination of g-frames and fusion frames. We  present those identities in these frames.

\section{Preliminaries}
Throughout this paper, $H$ and $K$ are separable Hilbert spaces, $\pi_{V}$ is the orthogonal projection from $H$ onto a closed subspace $V\subset H$ and $\mathcal{B}(H,K)$ is the collection of all the bounded linear operators of $H$ into $K$. If $K=H$, then $\mathcal{B}(H,H)$ will be denoted by $\mathcal{B}(H)$. 
Also, $\lbrace H_j\rbrace_{j\in\Bbb J}$ is a sequence of Hilbert spaces  and $\Lambda_j\in\mathcal{B}(H,H_j)$ for each $j\in\Bbb J$ where $\Bbb J$ is a subset of $\Bbb Z$.
The following lemmas  from Operator Theory are useful. 
\begin{lemma}(\cite{ga})\label{l0}
Let $V\subseteq H$ be a closed subspace, and $T$ be a linear  bounded operator on $H$. Then
$$\pi_{V}T^*=\pi_{V}T^* \pi_{\overline{TV}}.$$
\end{lemma}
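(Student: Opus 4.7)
My plan is to rewrite the desired identity in the equivalent form
$\pi_V T^*(I-\pi_{\overline{TV}})=0$ and then verify this by a direct orthogonality argument. The key observation is that $I-\pi_{\overline{TV}}$ is the orthogonal projection onto $(\overline{TV})^\perp$, so it suffices to show that $T^*$ sends $(\overline{TV})^\perp$ into $V^\perp$; applying $\pi_V$ will then kill it.

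To see this containment, I would take an arbitrary $h\in(\overline{TV})^\perp$ and any $v\in V$, and compute
$$\langle T^*h,v\rangle=\langle h,Tv\rangle=0,$$
since $Tv\in TV\subseteq\overline{TV}$ while $h$ is orthogonal to this closure. Thus $T^*h\in V^\perp$, which gives $\pi_V T^*h=0$ for every such $h$, i.e., $\pi_V T^*(I-\pi_{\overline{TV}})=0$. Rearranging yields the claimed identity $\pi_V T^*=\pi_V T^*\pi_{\overline{TV}}$.

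There is no real obstacle here: the only subtle point is remembering that one must take the \emph{closure} $\overline{TV}$ (because $TV$ need not be closed) so that the orthogonal projection $\pi_{\overline{TV}}$ is well-defined, and that passing to the closure does not affect the inner product computation above by continuity of $\langle h,\cdot\rangle$. The proof is essentially a one-line consequence of the adjoint relation $\langle T^*h,v\rangle=\langle h,Tv\rangle$ combined with the fact that $\ker\pi_V=V^\perp$.
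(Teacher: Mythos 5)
Your proof is correct: reducing the identity to $\pi_V T^*(I-\pi_{\overline{TV}})=0$ and checking via $\langle T^*h,v\rangle=\langle h,Tv\rangle=0$ that $T^*$ maps $(\overline{TV})^{\perp}$ into $V^{\perp}=\ker\pi_V$ is exactly the standard argument for this lemma. The paper itself gives no proof (it only cites Gavruta's duality paper), and your argument coincides with the one found there, so nothing further is needed.
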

\begin{lemma}\label{l1}
Let $u\in\mathcal{B}(H)$ be adjoint and $v:=au^2+bu+c$ where $a,b,c\in\Bbb R$. 
\begin{enumerate}
\item[(I)] If $a>0$, then
$$\inf_{\Vert f\Vert=1}\langle vf, f\rangle\geq\frac{4ac-b^2}{4a}.$$
\item[(II)] If $a<0$, then
$$\sup_{\Vert f\Vert=1}\langle vf, f\rangle\leq\frac{4ac-b^2}{4a}.$$
\end{enumerate}
\end{lemma}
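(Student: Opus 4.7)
The plan is to reduce the operator inequality to an elementary one-variable quadratic estimate via Cauchy--Schwarz.

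\textbf{Setup.} First I would fix a unit vector $f \in H$ and expand
\[
\langle vf,f\rangle = a\langle u^2 f,f\rangle + b\langle uf,f\rangle + c.
\]
Because $u$ is self-adjoint, $\langle u^2 f,f\rangle = \langle uf,uf\rangle = \|uf\|^2$, so
\[
\langle vf,f\rangle = a\|uf\|^2 + b\langle uf,f\rangle + c.
\]
Let $x := \langle uf,f\rangle$, which is real since $u$ is self-adjoint. By Cauchy--Schwarz applied to the unit vector $f$,
\[
x^2 = \langle uf,f\rangle^2 \le \|uf\|^2\,\|f\|^2 = \|uf\|^2.
\]

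\textbf{Case (I): $a>0$.} Using $\|uf\|^2 \ge x^2$ and $a>0$, I would get
\[
\langle vf,f\rangle \ge ax^2 + bx + c.
\]
Completing the square, $ax^2 + bx + c = a\bigl(x + \tfrac{b}{2a}\bigr)^2 + \tfrac{4ac-b^2}{4a} \ge \tfrac{4ac-b^2}{4a}$ for every real $x$. Since this lower bound holds for every unit $f$, taking the infimum yields the desired inequality.

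\textbf{Case (II): $a<0$.} Here $a\|uf\|^2 \le ax^2$, so
\[
\langle vf,f\rangle \le ax^2 + bx + c \le \frac{4ac-b^2}{4a},
\]
the last inequality being the maximum of a downward-opening parabola. Taking the supremum finishes the proof.

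\textbf{Main obstacle.} There is essentially no obstacle: the only subtlety is making sure the Cauchy--Schwarz step is applied in the correct direction for each sign of $a$, and that $\langle uf,f\rangle$ is real (which is guaranteed by self-adjointness of $u$). The rest is the textbook vertex formula for a real quadratic.
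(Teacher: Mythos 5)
Your proof is correct. The paper states this lemma without any proof or citation, so there is no argument of the authors' to compare against; your write-up actually supplies the missing justification. Each step checks out: self-adjointness gives $\langle u^2f,f\rangle=\Vert uf\Vert^2$ and the reality of $x=\langle uf,f\rangle$, Cauchy--Schwarz gives $x^2\leq\Vert uf\Vert^2$ for unit $f$, and you apply this inequality in the direction dictated by the sign of $a$ before invoking the vertex formula. A marginally slicker route, worth knowing, avoids Cauchy--Schwarz entirely: complete the square at the operator level, writing
\[
v=a\Bigl(u+\tfrac{b}{2a}\,id_H\Bigr)^2+\tfrac{4ac-b^2}{4a}\,id_H,
\]
so that for every unit vector $f$,
\[
\langle vf,f\rangle=a\,\Bigl\Vert\bigl(u+\tfrac{b}{2a}\bigr)f\Bigr\Vert^2+\tfrac{4ac-b^2}{4a},
\]
from which both (I) and (II) are immediate according to the sign of $a$. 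That version makes it transparent that the extremal bound is approached exactly when $f$ is close to the kernel (or approximate kernel) of $u+\tfrac{b}{2a}$, but your argument is equally rigorous and needs no revision.
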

\begin{lemma}\label{l2}
If $u,v$ are operators on $H$ satisfying $u+v=id_{H}$, then $u-v=u^2-v^2$.
\end{lemma}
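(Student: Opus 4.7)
The statement is a short algebraic identity, so the plan is essentially a one-line calculation exploiting the relation $v = id_H - u$.

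My approach would be to eliminate $v$ using the hypothesis. First I would substitute $v = id_H - u$ into $u^2 - v^2$ and expand:
\begin{equation*}
u^2 - v^2 = u^2 - (id_H - u)^2 = u^2 - id_H + 2u - u^2 = 2u - id_H.
\end{equation*}
Then I would rewrite the right-hand side as $u - (id_H - u) = u - v$, which gives the claim.

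An equivalent route is to first observe that $u$ and $v$ commute: from $v = id_H - u$ one has $uv = u - u^2 = vu$. Then the standard factorization $u^2 - v^2 = (u-v)(u+v)$ is valid in this non-commutative setting, and since $u+v = id_H$ it collapses immediately to $u - v$. I would probably present the substitution version since it is self-contained and avoids invoking commutativity explicitly.

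There is really no obstacle here; the only thing to be careful about is that the factorization $a^2 - b^2 = (a-b)(a+b)$ is not automatic for operators, so one must either justify commutativity or (as above) bypass it entirely by direct expansion. Nothing from Lemma \ref{l0} or Lemma \ref{l1} is needed.
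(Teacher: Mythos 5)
Your substitution argument is exactly the paper's proof read in the opposite direction: the paper writes $u-v=2u-id_{H}=u^{2}-(id_{H}-u)^{2}=u^{2}-v^{2}$, which is the same expansion of $(id_H-u)^2$ you perform. The proposal is correct and takes essentially the same approach.
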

\begin{proof}
We can write
$$u-v=2u-id_{H}=u^2-(id_H-u)^2=u^2-v^2.$$
\end{proof}
 We define the space $\mathscr{H}_2:=(\sum_{j\in\Bbb J}\oplus H_j)_{\ell_2}$ by
\begin{eqnarray*}
\mathscr{H}_2=\big\lbrace \lbrace f_j\rbrace_{j\in\Bbb J} \ : \ f_j\in H_j , \ \sum_{j\in\Bbb J}\Vert f_j\Vert^2<\infty\big\rbrace,
\end{eqnarray*}
with the inner product defined by
$$\langle \lbrace f_j\rbrace, \lbrace g_j\rbrace\rangle=\sum_{j\in\Bbb J}\langle f_j, g_j\rangle.$$
It is clear that $\mathscr{H}_2$ is a Hilbert space with pointwise operations.
\begin{definition}
We call the sequence $\lbrace \Lambda_j\rbrace_{j\in\Bbb J}$ a g-frame for $H$ with respect to $\{H_j\}_{j\in\Bbb J}$ if there exist $0<A\leq B<\infty$ such that for each $f\in H$
\begin{eqnarray}\label{g}
A\Vert f\Vert^2\leq\sum_{j\in\Bbb J}\Vert \Lambda_j f\Vert^2\leq B\Vert f\Vert^2.
\end{eqnarray} 
\end{definition}
If $A=B=1$, we call $\lbrace \Lambda_j\rbrace_{j\in\Bbb J}$ a Parseval g-frame.
The synthesis and analysis operators in  g-frames are defined by
\begin{align*}
T_{\Lambda}&:\mathscr{H}_2\longrightarrow H \ \ \ \ \ \ \ \ \ , \ \ \ \ \ \ T_{\Lambda}^*:H\longrightarrow\mathscr{H}_2\\
T_{\Lambda}(\lbrace f_j\rbrace_{j\in\Bbb J})&=\sum_{j\in\Bbb J} \Lambda_{j}^{*}f_j \ \ \ , \ \ \ \ T_{\Lambda}^*(f)=\lbrace \Lambda_j f\rbrace_{j\in\Bbb J}.
\end{align*}
Therefore, the g-frame operator is defined by
$$Sf=TT^*f=\sum_{j\in\Bbb J}\Lambda^*_j \Lambda_j f.$$
The operator $S$ is a bounded, positive and invertible.
If $\tilde{\Lambda}_j:=\Lambda_j S^{-1}$, then $\{\tilde{\Lambda}_j\}_{j\in\Bbb J}$ is called a (canonical) dual g-frame of $\lbrace \Lambda_j\rbrace_{j\in\Bbb J}$ and we can write\begin{eqnarray}\label{g1}
f=\sum_{j\in\Bbb J}\tilde{\Lambda}^{\ast}_{j}\Lambda_j f=\sum_{j\in\Bbb J}\Lambda^{\ast}_j\tilde{\Lambda}_j f.
\end{eqnarray}
If $\{\Lambda_j\}_{j\in\Bbb J}$ is a g-frame for $H$ with bounds $A$ and $B$, respectively, then $\{\tilde{\Lambda}_j\}_{j\in\Bbb J}$ is a g-frame for $H$ too with bounds $B^{-1}$ and $A^{-1}$, respectively (for more details, see \cite{sun}).

\begin{definition}
Let $W=\lbrace W_j\rbrace_{j\in\Bbb J}$ be a family of closed subspaces of $H$, $\lbrace v_j\rbrace_{j\in\Bbb J}$ be a family of weights, i.e. $v_j>0$. We say $\Lambda:=(W_j, \Lambda_j, v_j)$ is a  g-fusion frame for $H$ if there exists $0<A\leq B<\infty$ such that for each $f\in H$
\begin{eqnarray}\label{gf}
A\Vert f\Vert^2\leq\sum_{j\in\Bbb J}v_j^2\Vert \Lambda_j \pi_{W_j}f\Vert^2\leq B\Vert f\Vert^2.
\end{eqnarray} 
\end{definition}
We call $\Lambda$ a Parseval g-fusion frame if $A=B=1$. When the right hand of (\ref{gf}) holds, $\Lambda$ is called a g-fusion Bessel sequence for $H$ with bound $B$. Throughout this paper, $\Lambda$ will be a triple $(W_j, \Lambda_j, v_j)$ with $j\in\Bbb J$.

The synthesis and analysis operators in the g-fusion frames are defined by (for more details, see \cite{sad})
\begin{align*}
T_{\Lambda}&:\mathscr{H}_2\longrightarrow H \ \ \ \ \ \ \ \ \ , \ \ \ \ \ \ T_{\Lambda}^*:H\longrightarrow\mathscr{H}_2\\
T_{\Lambda}(\lbrace f_j\rbrace_{j\in\Bbb J})&=\sum_{j\in\Bbb J}v_j \pi_{W_j}\Lambda_{j}^{*}f_j \ \ \ , \ \ \ \ T_{\Lambda}^*(f)=\lbrace v_j \Lambda_j \pi_{W_j}f\rbrace_{j\in\Bbb J}.
\end{align*}
Thus, the g-fusion frame operator is given by
$$S_{\Lambda}f=T_{\Lambda}T^*_{\Lambda}f=\sum_{j\in\Bbb J}v_j^2 \pi_{W_j}\Lambda^*_j \Lambda_j \pi_{W_j}f.$$
Therefore
$$A\  id_{H}\leq S_{\Lambda}\leq B\  id_H.$$
This means that $S_{\Lambda}$ is a bounded, positive and invertible operator (with adjoint inverse) and we have
$$B^{-1}id_{H}\leq S_{\Lambda}^{-1}\leq A^{-1}id_H.$$
 So, we have the folloeing reconstruction formula for any $f\in H$
\begin{equation}\label{gf1}
f=\sum_{j\in\Bbb J}v_j^2 \pi_{W_j}\Lambda^*_j \Lambda_j \pi_{W_j}S^{-1}_{\Lambda}f
=\sum_{j\in\Bbb J}v_j^2 S^{-1}_{\Lambda}\pi_{W_j}\Lambda^*_j \Lambda_j \pi_{W_j}f.
\end{equation} 
Let $\tilde{\Lambda}:=(S^{-1}_{\Lambda}W_j, \Lambda_j \pi_{W_j}S_{\Lambda}^{-1}, v_j)$. Then, $\tilde{\Lambda}$ is called the \textit{(canonical) dual g-fusion frame}  of $\Lambda$. Hence, for each $f\in H$, we get (see \cite{sad})
\begin{align}\label{gfd}
f=\sum_{j\in\Bbb J}v_j^2\pi_{W_j}\Lambda^*_j\tilde{\Lambda_j}\pi_{\tilde{W_j}}f=
\sum_{j\in\Bbb J}v_j^2\pi_{\tilde{W_j}}\tilde{\Lambda_j}^*\Lambda_j\pi_{W_j}f,
\end{align}
where $\tilde{W_j}:=S^{-1}_{\Lambda}W_j  \ , \ \tilde{\Lambda_j}:=\Lambda_j \pi_{W_j}S_{\Lambda}^{-1}.$
Thus, we obtain
\begin{align}\label{inverse}
\langle S^{-1}_{\Lambda}f, f\rangle=\sum_{j\in\Bbb J}v_j^2\Vert\tilde{\Lambda}_j\pi_{\tilde{W}_j}f\Vert^2.
\end{align}
\section{The Main Results}
 Let $\lbrace \Lambda_j\rbrace_{j\in\Bbb J}$ be a g-frame for $H$ with respect to $\{H_j\}_{j\in\Bbb J}$ with boinds $A, B$ and $\{\tilde{\Lambda}_j\}_{j\in\Bbb J}$ be a (canonical) dual g-frame of $\lbrace \Lambda_j\rbrace_{j\in\Bbb J}$. Suppose that $\Bbb I\subseteq\Bbb J$  and let
\begin{align*}
S_{\Bbb I}&:H\rightarrow H\\
S_{\Bbb I}f&:=\sum_{j\in\Bbb I} \Lambda^{\ast}_j\tilde{\Lambda}_j f.
\end{align*}
We have
\begin{align*}
\Vert S_{\Bbb I}f\Vert^2&=\big(\sup_{\Vert h\Vert=1}\vert\langle S_{\Bbb I}f,h\rangle\vert\big)^2\\
&=\sup_{\Vert h\Vert=1}\big(\sum_{j}\vert\langle\tilde{\Lambda}_j f, \Lambda_j h\rangle\vert\big)^2\\
&\leq\sum_{j}\Vert\tilde{\Lambda}_j f\Vert^2 .\sup_{\Vert h\Vert=1}\sum_{j}\Vert \Lambda_j h\Vert^2\\
&\leq BA^{-1}\Vert f\Vert^2.
\end{align*}
Thus, $S_{\Bbb I}\in\mathcal{B}(H)$ and is positive. From (\ref{g1}), we obtain that $S_{\Bbb I}+S_{\Bbb I^c}=id_{H}$.
\begin{theorem}\label{t1}
For $f\in H$, we have
\begin{eqnarray*}
\sum_{j\in\Bbb I}\langle \tilde{\Lambda}_j f,\Lambda_j f\rangle-\Vert S_{\Bbb I}f\Vert^2 =\sum_{j\in\Bbb I^c}\overline{\langle \tilde{\Lambda}_j f,\Lambda_j f\rangle}-\Vert S_{\Bbb I^c}f\Vert^2
\end{eqnarray*}
where $\Bbb I^c$ is the complement of $\Bbb I$.
\end{theorem}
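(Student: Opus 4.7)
The plan is to reduce the claimed identity to a single relation involving the operator $S_{\Bbb I}$, exploiting the fact established in the preceding paragraph that $S_{\Bbb I}+S_{\Bbb I^c}=id_H$. The structure on both sides is the same: a certain sum involving the $j$-th frame terms, minus the squared norm of an operator applied to $f$. So rewriting each sum as $\langle S_{\Bbb I}f,f\rangle$ (respectively $\overline{\langle S_{\Bbb I^c}f,f\rangle}$) will reduce everything to the operator $S_{\Bbb I}$ alone.

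First I rewrite the four terms. By adjointness, $\langle\tilde\Lambda_j f,\Lambda_j f\rangle=\langle\Lambda_j^*\tilde\Lambda_j f,f\rangle$, so
$$\sum_{j\in\Bbb I}\langle\tilde\Lambda_j f,\Lambda_j f\rangle=\langle S_{\Bbb I}f,f\rangle.$$
The reconstruction formula (\ref{g1}) gives $\sum_{j\in\Bbb J}\langle\tilde\Lambda_j f,\Lambda_j f\rangle=\|f\|^2$, and conjugating the tail produces
$$\sum_{j\in\Bbb I^c}\overline{\langle\tilde\Lambda_j f,\Lambda_j f\rangle}=\|f\|^2-\overline{\langle S_{\Bbb I}f,f\rangle}.$$
Next, writing $S_{\Bbb I^c}=id_H-S_{\Bbb I}$ and expanding the norm directly,
$$\|S_{\Bbb I^c}f\|^2=\|f-S_{\Bbb I}f\|^2=\|f\|^2-\langle S_{\Bbb I}f,f\rangle-\overline{\langle S_{\Bbb I}f,f\rangle}+\|S_{\Bbb I}f\|^2,$$
which rearranges to
$$\|S_{\Bbb I}f\|^2-\|S_{\Bbb I^c}f\|^2=\langle S_{\Bbb I}f,f\rangle+\overline{\langle S_{\Bbb I}f,f\rangle}-\|f\|^2.$$
Substituting these three identities into the claim collapses it into an algebraic tautology.

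The one subtlety to be careful about is that $S_{\Bbb I}$ is not obviously self-adjoint (note $S_{\Bbb I}=TT^*S^{-1}$ restricted to the $\Bbb I$-index, which mixes $S^{-1}$ and a symmetric piece), so I must use $\|S_{\Bbb I}f\|^2=\langle S_{\Bbb I}f,S_{\Bbb I}f\rangle$ rather than $\langle S_{\Bbb I}^2 f,f\rangle$. This is precisely why I expand $\|f-S_{\Bbb I}f\|^2$ directly rather than invoking Lemma \ref{l2}: the lemma would give the operator identity $S_{\Bbb I}-S_{\Bbb I^c}=S_{\Bbb I}^2-S_{\Bbb I^c}^2$, which is about the wrong quantity for a norm identity when $S_{\Bbb I}$ fails to be self-adjoint. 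Past that small pitfall, no real obstacle is anticipated—the theorem is essentially bookkeeping built on the partition $S_{\Bbb I}+S_{\Bbb I^c}=id_H$ and the identity $\sum_{j\in\Bbb J}\langle\tilde\Lambda_j f,\Lambda_j f\rangle=\|f\|^2$.
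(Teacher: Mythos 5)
Your proof is correct and is essentially the paper's argument in scalar form: both hinge on $S_{\Bbb I}+S_{\Bbb I^c}=id_H$, and your direct expansion of $\Vert f-S_{\Bbb I}f\Vert^2$ is exactly the polarized version of the paper's operator identity $(id_{H}-S_{\Bbb I})^{\ast}S_{\Bbb I}=S^{\ast}_{\Bbb I^c}(id_{H}-S_{\Bbb I^c})$. Your caution about $S_{\Bbb I}$ not being self-adjoint is well placed and mirrors the paper's careful use of $S_{\Bbb I}^{\ast}$ rather than Lemma \ref{l2}.
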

\begin{proof}
For each $f\in H$, we have
\begin{align*}
\sum_{j\in\Bbb I}\langle \tilde{\Lambda}_j f,\Lambda_j f\rangle-\Vert \sum_{j\in\Bbb I} \Lambda^{\ast}_j \tilde{\Lambda}_j f\Vert^2&=\langle S_{\Bbb I}f,f\rangle-\Vert S_{\Bbb I}f\Vert^2\\
&=\langle S_{\Bbb I}f,f\rangle-\langle S^{\ast}_{\Bbb I} S_{\Bbb I}f,f\rangle\\
&=\langle(id_{H}-S_{\Bbb I})^{\ast}S_{\Bbb I}f,f\rangle\\
&=\langle S^{\ast}_{\Bbb I^c}(id_{H}-S_{\Bbb I^c})f,f\rangle\\
&=\langle S^{\ast}_{\Bbb I^c}f,f\rangle-\langle S^{\ast}_{\Bbb I^c}S_{\Bbb I^c}f,f\rangle\\
&=\langle f,S_{\Bbb I^c}f\rangle-\langle S_{\Bbb I^c}f,S_{\Bbb I^c}f\rangle\\
&=\sum_{j\in\Bbb I^c}\langle \Lambda_j f,\tilde{\Lambda}_j f\rangle-\Vert \sum_{j\in\Bbb I^c} \Lambda^{\ast}_j \tilde{\Lambda}_j f\Vert^2\\
&=\sum_{j\in\Bbb I^c}\overline{\langle \tilde{\Lambda}_j f,\Lambda_j f\rangle}-\Vert \sum_{j\in\Bbb I^c} \Lambda^{\ast}_j \tilde{\Lambda}_j f\Vert^2
\end{align*}
and the proof is completed.
\end{proof}
Now, if $\lbrace \Lambda_j\rbrace_{j\in\Bbb J}$ is a Parseval g-frame, then $\tilde{\Lambda_j}=\Lambda_j$ and we obtain the following famous formula:
\begin{align*}
\sum_{j\in\Bbb I}\Vert\Lambda_jf\Vert^2-\Vert S_{\Bbb I}f\Vert^2=\sum_{j\in\Bbb I^c}\Vert\Lambda_j f\Vert^2-\Vert S_{\Bbb I^c}f\Vert^2,
\end{align*}
where $S_{\Bbb I}f=\sum_{j\in\Bbb I} \Lambda^{\ast}_j\Lambda_j f$.

The same can be presented for g-fusion frames. Let $\Lambda$ be a g-fusion frame for $H$ with (canonical) dual g-fusion frame $\tilde{\Lambda}=(\tilde{W_j}, \tilde{\Lambda_j}, v_j)$ where $\tilde{W_j}=S^{-1}_{\Lambda}W_j$ and $\tilde{\Lambda_j}=\Lambda_j \pi_{W_j}S_{\Lambda}^{-1}$. For simplicity, we show the following operator with the same symbol $S_{\Bbb I}$ in which again $\Bbb I$ is a finite subset of $\Bbb J$:
\begin{align}
S_{\Bbb I}f=\sum_{j\in\Bbb I}v_j^2\pi_{W_j}\Lambda^*_j\tilde{\Lambda_j}\pi_{\tilde{W_j}}f , \ \ \ \ \ (\forall f\in H).
\end{align}
It is easy to check that, $S_{\Bbb I}\in\mathcal{B}(H)$ and positive. Again, we have 
$$S_{\Bbb I}+S_{\Bbb I^c}=id_{H}.$$
\begin{remark}
Let $\Lambda$ be a Parseval g-fusion frame for $H$. Since $\mathcal{B}(H)$ is a $C^*$-algebra and $S_{\Bbb I}$ is a positive, so
$r(S_{\Bbb I})=\Vert S_{\Bbb I}\Vert$, where $r$ is the spectral radius. Thus
$$\max_{\lambda\in\sigma(S_{\Bbb I})}\vert\lambda\vert=r(S_{\Bbb I})\leq 1$$
and we conclude that $\sigma(S_{\Bbb I})\in[0, 1]$.
\end{remark}
\begin{theorem}\label{tg1}
Let $f\in H$, then
\begin{eqnarray*}
\sum_{j\in\Bbb I}v_j^2\langle \tilde{\Lambda}_j \pi_{\tilde{W_j}}f,\Lambda_j \pi_{W_j}f\rangle-\Vert S_{\Bbb I}f\Vert^2 =\sum_{j\in\Bbb I^c}v_j^2\overline{\langle \tilde{\Lambda}_j \pi_{\tilde{W_j}}f,\Lambda_j \pi_{W_j}f\rangle}-\Vert S_{\Bbb I^c}f\Vert^2.
\end{eqnarray*}
\end{theorem}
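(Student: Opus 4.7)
The plan is to mirror, step for step, the algebraic template used in the proof of Theorem~\ref{t1}, substituting the g-fusion operator
$$S_{\Bbb I}f=\sum_{j\in\Bbb I}v_j^2\pi_{W_j}\Lambda^*_j\tilde{\Lambda_j}\pi_{\tilde{W_j}}f$$
(defined immediately before the statement) for its g-frame counterpart. The decomposition $S_{\Bbb I}+S_{\Bbb I^c}=id_{H}$ is already available and will play the same pivotal role.

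First I would rewrite each inner product on the left-hand side by moving $\Lambda_j\pi_{W_j}$ to the other slot,
$$v_j^2\langle \tilde{\Lambda}_j\pi_{\tilde{W_j}}f,\Lambda_j\pi_{W_j}f\rangle = v_j^2\langle \pi_{W_j}\Lambda^*_j\tilde{\Lambda}_j\pi_{\tilde{W_j}}f,f\rangle,$$
so that summing over $\Bbb I$ collapses the first sum to $\langle S_{\Bbb I}f,f\rangle$. Combined with $\Vert S_{\Bbb I}f\Vert^2=\langle S_{\Bbb I}^*S_{\Bbb I}f,f\rangle$, the left-hand side becomes $\langle(id_{H}-S_{\Bbb I}^*)S_{\Bbb I}f,f\rangle$.

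Next I would invoke $S_{\Bbb I}+S_{\Bbb I^c}=id_{H}$; passing to adjoints gives $id_{H}-S_{\Bbb I}^*=S_{\Bbb I^c}^*$, while substituting $S_{\Bbb I}=id_{H}-S_{\Bbb I^c}$ in the remaining factor turns the expression into
$$\langle S_{\Bbb I^c}^*f,f\rangle-\langle S_{\Bbb I^c}^*S_{\Bbb I^c}f,f\rangle = \langle f,S_{\Bbb I^c}f\rangle-\Vert S_{\Bbb I^c}f\Vert^2.$$
Finally, rewriting $\langle f,S_{\Bbb I^c}f\rangle=\overline{\langle S_{\Bbb I^c}f,f\rangle}$ and distributing the conjugation across the defining sum of $S_{\Bbb I^c}$ reproduces the right-hand side.

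I do not anticipate any essential obstacle. The only delicate point, absent from the g-frame proof of Theorem~\ref{t1}, is that here the two per-index summands in the dual reconstruction formula~(\ref{gfd}) need not coincide, so $S_{\Bbb I}$ is not manifestly self-adjoint. This is precisely what forces the complex conjugate in the statement; the adjoints must be carried through the whole calculation rather than collapsed, and the overline on the right emerges from the one adjoint swap $\langle f,S_{\Bbb I^c}f\rangle=\overline{\langle S_{\Bbb I^c}f,f\rangle}$ performed at the last step.
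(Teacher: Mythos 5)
Your argument is correct and is exactly the route the paper intends: its proof of Theorem~\ref{tg1} simply says ``similar to Theorem~\ref{t1},'' and you have carried out that same chain, namely collapsing the sum to $\langle S_{\Bbb I}f,f\rangle$, using $id_H-S_{\Bbb I}^*=S_{\Bbb I^c}^*$ and $S_{\Bbb I}=id_H-S_{\Bbb I^c}$, and producing the conjugate from $\langle f,S_{\Bbb I^c}f\rangle=\overline{\langle S_{\Bbb I^c}f,f\rangle}$. One small quibble with your closing remark: the non-self-adjointness issue is not new here --- it is already present in Theorem~\ref{t1} (where $\Lambda_j^*\tilde\Lambda_j=\Lambda_j^*\Lambda_jS^{-1}$ is likewise not manifestly self-adjoint and the statement likewise carries an overline) --- but this does not affect your proof.
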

\begin{proof}
The proof follows by a similar argument to the proof of Theorem \ref{t1}.
\end{proof}
\begin{corollary}\label{cor1}
Let $\Lambda$ be a Parseval g-fusion frame for $H$. Then
\begin{small}
\begin{align*}
\sum_{j\in\Bbb I}v_j^2\Vert \Lambda_j \pi_{W_j}f\Vert^2-\Vert \sum_{j\in\Bbb I}v_j^2 \pi_{W_j}\Lambda^*_j &\Lambda_j \pi_{W_j}f\Vert^2 =\\
&=\sum_{j\in\Bbb I^c}v_j^2\Vert \Lambda_j \pi_{W_j}f\Vert^2-\Vert \sum_{j\in\Bbb I^c}v_j^2 \pi_{W_j}\Lambda^*_j \Lambda_j \pi_{W_j}f\Vert^2.
\end{align*}
\end{small}
Moreover
\begin{eqnarray*}
\sum_{j\in\Bbb I}v_j^2\Vert \Lambda_j \pi_{W_j}f\Vert^2+\Vert \sum_{j\in\Bbb I^c}v_j^2 \pi_{W_j}\Lambda^*_j \Lambda_j \pi_{W_j}f\Vert^2\geq
\frac{3}{4}\Vert f\Vert^2.
\end{eqnarray*}
\end{corollary}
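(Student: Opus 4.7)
My plan is to derive both statements directly from the machinery already set up. For the first equality, I would specialize Theorem \ref{tg1} to the Parseval case. When $\Lambda$ is Parseval, $S_\Lambda = id_H$, so $\tilde{W}_j = S_\Lambda^{-1}W_j = W_j$ and $\tilde{\Lambda}_j = \Lambda_j\pi_{W_j}S_\Lambda^{-1} = \Lambda_j\pi_{W_j}$. Substituting into Theorem \ref{tg1}, each inner product $\langle\tilde{\Lambda}_j\pi_{\tilde W_j}f,\Lambda_j\pi_{W_j}f\rangle$ collapses to $\|\Lambda_j\pi_{W_j}f\|^2$, which is real, so the conjugation on the right-hand side is vacuous and the first displayed identity follows at once.

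For the inequality, the key observation is that the left-hand side is a quadratic form in the single positive self-adjoint operator $u := S_{\mathbb I^c}$. Concretely, by the Parseval hypothesis,
$$\sum_{j\in\mathbb I}v_j^2\|\Lambda_j\pi_{W_j}f\|^2 = \langle S_{\mathbb I}f,f\rangle,\qquad \Big\|\sum_{j\in\mathbb I^c}v_j^2\pi_{W_j}\Lambda_j^*\Lambda_j\pi_{W_j}f\Big\|^2 = \|S_{\mathbb I^c}f\|^2 = \langle S_{\mathbb I^c}^{2}f,f\rangle,$$
the second equality using self-adjointness of $S_{\mathbb I^c}$. Since $S_{\mathbb I}+S_{\mathbb I^c}=id_H$, the sum equals $\langle(id_H-u+u^2)f,f\rangle$.

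Now I would invoke Lemma \ref{l1}(I) with $a=1$, $b=-1$, $c=1$ (noting that $u$ is self-adjoint, so the hypothesis of the lemma is met) to obtain
$$\inf_{\|f\|=1}\langle(u^2-u+id_H)f,f\rangle \geq \frac{4\cdot 1\cdot 1-(-1)^2}{4\cdot 1} = \frac{3}{4},$$
which, after homogenizing in $\|f\|$, is exactly the claimed $\frac{3}{4}\|f\|^2$ bound. I do not anticipate a real obstacle: the whole argument is essentially a recognition step, reading the desired left-hand side as a quadratic polynomial in $u=S_{\mathbb I^c}$ and then feeding it into Lemma \ref{l1}(I). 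The only thing to double-check is that the two pieces $S_{\mathbb I}$ and $S_{\mathbb I^c}$ are genuinely self-adjoint in the g-fusion setting, which is immediate from their explicit form $\sum v_j^2\pi_{W_j}\Lambda_j^*\Lambda_j\pi_{W_j}$.
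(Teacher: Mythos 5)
Your proposal is correct and follows essentially the same route as the paper: the equality is the Parseval specialization of Theorem \ref{tg1}, and the inequality comes from rewriting $\langle(S_{\Bbb I}+S_{\Bbb I^c}^2)f,f\rangle$ as a quadratic form $\langle(id_H-u+u^2)f,f\rangle$ in a single self-adjoint operator and applying Lemma \ref{l1}(I) with $a=1$, $b=-1$, $c=1$. The only (immaterial) difference is that you take $u=S_{\Bbb I^c}$ where the paper expands $S_{\Bbb I^c}^2=(id_H-S_{\Bbb I})^2$ and works with $u=S_{\Bbb I}$.
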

\begin{proof}
If $f\in H$, we obtain
\begin{align*}
\sum_{j\in\Bbb I}v_j^2\Vert \Lambda_j \pi_{W_j}f\Vert^2+\Vert S_{\Bbb I^c}f\Vert^2&=
\big\langle(S_{\Bbb I}+S^2_{\Bbb I^c})f, f\big\rangle\\
&=\big\langle(S_{\Bbb I}+id_H-2S_{\Bbb I}+S^2_{\Bbb I})f, f\big\rangle\\
&=\langle(id_H-S_{\Bbb I}+S_{\Bbb I}^2)f, f\rangle.
\end{align*}
Now, by Lemma \ref{l1} for $a=1$, $b=-1$ and $c=1$ the inequality holds.
\end{proof}
\begin{corollary}\label{cor2}
Let $\Lambda$ be a Parseval g-fusion frame for $H$. Then
$$0\leq S_{\Bbb I}-S_{\Bbb I}^2\leq\frac{1}{4}id_H$$
\end{corollary}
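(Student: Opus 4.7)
The plan is to derive the two-sided operator inequality directly from the structural facts already established: $S_{\Bbb I}$ is a bounded positive self-adjoint operator with $S_{\Bbb I}+S_{\Bbb I^c}=id_H$, and (per the Remark) $\sigma(S_{\Bbb I})\subseteq[0,1]$ in the Parseval setting. The central observation is the algebraic identity
\[
S_{\Bbb I}-S_{\Bbb I}^2 \;=\; S_{\Bbb I}(id_H-S_{\Bbb I}) \;=\; S_{\Bbb I}\,S_{\Bbb I^c},
\]
which links the quantity we want to bound to the commuting pair $S_{\Bbb I}, S_{\Bbb I^c}$.

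For the lower bound, I would use that $S_{\Bbb I}$ and $S_{\Bbb I^c}=id_H-S_{\Bbb I}$ are two commuting positive self-adjoint operators. Writing $S_{\Bbb I^c}=(S_{\Bbb I^c}^{1/2})^2$, we obtain
\[
\langle (S_{\Bbb I}-S_{\Bbb I}^2)f,f\rangle
=\langle S_{\Bbb I}\,S_{\Bbb I^c}^{1/2}\,S_{\Bbb I^c}^{1/2}f,f\rangle
=\|S_{\Bbb I}^{1/2}S_{\Bbb I^c}^{1/2}f\|^2\;\geq\;0,
\]
so $0\leq S_{\Bbb I}-S_{\Bbb I}^2$ holds in the operator sense.

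For the upper bound, the natural tool is Lemma \ref{l1}(II). I would set $u:=S_{\Bbb I}$ (self-adjoint by construction), take $a=-1$, $b=1$, $c=0$, and form $v=-u^2+u=S_{\Bbb I}-S_{\Bbb I}^2$. Then Lemma \ref{l1}(II) gives
\[
\sup_{\|f\|=1}\langle(S_{\Bbb I}-S_{\Bbb I}^2)f,f\rangle
\;\leq\;\frac{4ac-b^2}{4a}
\;=\;\frac{-1}{-4}
\;=\;\frac14,
\]
which, together with the polarization/self-adjointness of $S_{\Bbb I}-S_{\Bbb I}^2$, yields $S_{\Bbb I}-S_{\Bbb I}^2\leq\tfrac14\,id_H$.

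There is no real obstacle here: once one recognizes the factorization $S_{\Bbb I}-S_{\Bbb I}^2=S_{\Bbb I}S_{\Bbb I^c}$ and notes that $S_{\Bbb I}$ is self-adjoint (which follows from $S_{\Bbb I}$ being positive, already observed just after the definition of $S_{\Bbb I}$ in the g-fusion setting), both inequalities are immediate. The only point to be a little careful about is verifying self-adjointness of $S_{\Bbb I}$ in the g-fusion case, which is why the identity $S_{\Bbb I}+S_{\Bbb I^c}=id_H$ with $S_{\Bbb I^c}$ positive is useful: it forces $S_{\Bbb I}=id_H-S_{\Bbb I^c}$ to be self-adjoint and to satisfy $0\leq S_{\Bbb I}\leq id_H$, which is the input that Lemma \ref{l1} and the functional-calculus argument both rely on.
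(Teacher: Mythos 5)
Your proof is correct and follows essentially the same route as the paper's: both rest on the factorization $S_{\Bbb I}-S_{\Bbb I}^2=S_{\Bbb I}S_{\Bbb I^c}$ with commuting positive factors giving the lower bound, and on Lemma \ref{l1}(II) with $a=-1$, $b=1$, $c=0$ giving the upper bound $\tfrac14 id_H$. The only difference is that you spell out, via square roots, why a product of two commuting positive operators is positive, a step the paper merely asserts.
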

\begin{proof}
We have $S_{\Bbb I}S_{\Bbb I^c}=S_{\Bbb I^c}S_{\Bbb I}$. Then $0\leq S_{\Bbb I}S_{\Bbb I^c}=S_{\Bbb I}-S_{\Bbb I}^2$. Also, by Lemma \ref{l1}, we get
$$S_{\Bbb I}-S_{\Bbb I}^2\leq\frac{1}{4}id_H.$$
\end{proof}
\begin{theorem}\label{t3.3}
Let $\Lambda$ be a g-fusion frame with  the g-fusion frame operator $S_{\Lambda}$. If $\Bbb I\subseteq\Bbb J$  and $f\in H$, then
\begin{align*}
\sum_{j\in\Bbb I}v_j^2\Vert\Lambda_j\pi_{W_j}f\Vert^2+\Vert S_{\Lambda}^{-\frac{1}{2}}S_{\Bbb I^c}f\Vert^2=\sum_{j\in\Bbb I^c}v_j^2\Vert\Lambda_j\pi_{W_j}f\Vert^2+\Vert S_{\Lambda}^{-\frac{1}{2}}S_{\Bbb I}f\Vert^2.
\end{align*} 
\end{theorem}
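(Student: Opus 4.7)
The plan is to mimic the Parseval argument of Corollary \ref{cor1} after normalizing by $S_{\Lambda}^{-1/2}$, so that the relevant pair of operators again sums to $id_H$ and Lemma \ref{l2} can be invoked. Denote by $S_{\Bbb I}:=\sum_{j\in\Bbb I}v_j^2\pi_{W_j}\Lambda_j^*\Lambda_j\pi_{W_j}$ the partial g-fusion frame operator (positive and self-adjoint, and reducing to the $S_{\Bbb I}$ of the Parseval case), so that $\sum_{j\in\Bbb I}v_j^2\Vert\Lambda_j\pi_{W_j}f\Vert^2=\langle S_{\Bbb I}f,f\rangle$ and $S_{\Bbb I}+S_{\Bbb I^c}=S_{\Lambda}$. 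Since $S_{\Lambda}$ is positive and invertible, its positive square roots $S_{\Lambda}^{\pm 1/2}$ are available.

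The key substitution is
$$A:=S_{\Lambda}^{-1/2}S_{\Bbb I}S_{\Lambda}^{-1/2},\qquad B:=S_{\Lambda}^{-1/2}S_{\Bbb I^c}S_{\Lambda}^{-1/2},$$
which are positive self-adjoint and satisfy $A+B=S_{\Lambda}^{-1/2}S_{\Lambda}S_{\Lambda}^{-1/2}=id_H$, so Lemma \ref{l2} gives $A-B=A^2-B^2$. Setting $g:=S_{\Lambda}^{1/2}f$ and using self-adjointness of $S_{\Bbb I^c}$ together with $S_{\Bbb I^c}=S_{\Lambda}^{1/2}BS_{\Lambda}^{1/2}$, I would compute
$$\Vert S_{\Lambda}^{-1/2}S_{\Bbb I^c}f\Vert^2=\langle S_{\Bbb I^c}S_{\Lambda}^{-1}S_{\Bbb I^c}f,f\rangle=\langle B^2 g,g\rangle,$$
and analogously $\langle S_{\Bbb I}f,f\rangle=\langle Ag,g\rangle$, $\Vert S_{\Lambda}^{-1/2}S_{\Bbb I}f\Vert^2=\langle A^2 g,g\rangle$, and $\langle S_{\Bbb I^c}f,f\rangle=\langle Bg,g\rangle$.

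With these substitutions the claimed identity becomes $\langle(A+B^2)g,g\rangle=\langle(B+A^2)g,g\rangle$, whose difference $\langle(A-B-(A^2-B^2))g,g\rangle$ vanishes for every $g$ by Lemma \ref{l2}. The only subtle step is to recognise that the correct normalisation is by $S_{\Lambda}^{-1/2}$ (and not by $S_{\Lambda}^{-1}$ or no scaling at all): conjugation by $S_{\Lambda}^{-1/2}$ is precisely what turns the relation $S_{\Bbb I}+S_{\Bbb I^c}=S_{\Lambda}$ into $A+B=id_H$, after which the theorem reduces to the abstract algebraic identity of Lemma \ref{l2}.
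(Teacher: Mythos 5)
Your proof is correct, and it reaches the identity by a somewhat different route than the paper. The paper constructs the auxiliary Parseval g-fusion frame $(X_j,\Theta_j,v_j)$ with $\Theta_j=\Lambda_j\pi_{W_j}S_{\Lambda}^{-1/2}$ and $X_j=S_{\Lambda}^{-1/2}W_j$, applies the Parseval identity of Corollary \ref{cor1} to it, substitutes $S_{\Lambda}^{1/2}f$ for $f$, and then verifies the operator identity $\sum_{j\in\Bbb I}v_j^2\pi_{X_j}\Theta_j^*\Theta_j\pi_{X_j}=S_{\Lambda}^{-1/2}S_{\Bbb I}S_{\Lambda}^{-1/2}$ (which requires Lemma \ref{l0} to dispose of the projections $\pi_{X_j}$). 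Your argument isolates exactly that operator $A=S_{\Lambda}^{-1/2}S_{\Bbb I}S_{\Lambda}^{-1/2}$ and its complement $B$ directly, notes $A+B=id_H$, and invokes Lemma \ref{l2}; the substitution $g=S_{\Lambda}^{1/2}f$ plays the same role as the paper's replacement of $f$ by $S_{\Lambda}^{1/2}f$. So the two proofs share the same normalization and the same underlying algebraic identity $u-v=u^2-v^2$, but yours bypasses the frame-theoretic detour entirely: you never need to know that $(X_j,\Theta_j,v_j)$ is a Parseval g-fusion frame, nor handle the projections onto $S_{\Lambda}^{-1/2}W_j$, which is where the paper's proof has its only delicate point. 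The trade-off is that the paper's route reuses an already-established result (Corollary \ref{cor1}) and exhibits the auxiliary Parseval frame, which is then recycled in Corollary \ref{cor3} and the subsequent corollary, whereas your version is self-contained and purely operator-theoretic. You also correctly read $S_{\Bbb I}$ in this theorem as the partial frame operator $\sum_{j\in\Bbb I}v_j^2\pi_{W_j}\Lambda_j^*\Lambda_j\pi_{W_j}$ (the paper's notation is inconsistent here, since this operator is elsewhere written with the dual frame, and is called $M_{\Bbb I}$ in the final theorem); flagging that reading was the right call, as it is the only one consistent with the paper's own proof.
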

\begin{proof}
Let $\Theta_j:=\Lambda_j\pi_{W_j}S_{\Lambda}^{-\frac{1}{2}}$ and $X_j:=S_{\Lambda}^{-\frac{1}{2}}W_j$. In \cite{sad} has been showed that $(X_j, \Theta_j, v_j)$ is a Parseval g-fusion frame for $H$. Then, by Corollary \ref{cor1}, we have
\begin{small}
\begin{align*}
\sum_{j\in\Bbb I}v_j^2\Vert \Theta_j \pi_{X_j}f\Vert^2-\Vert \sum_{j\in\Bbb I}v_j^2 \pi_{X_j}\Theta^*_j &\Theta_j \pi_{X_j}f\Vert^2 =\\
&=\sum_{j\in\Bbb I^c}v_j^2\Vert \Theta_j \pi_{X_j}f\Vert^2-\Vert \sum_{j\in\Bbb I^c}v_j^2 \pi_{X_j}\Theta^*_j \Theta_j \pi_{X_j}f\Vert^2.
\end{align*}
\end{small}
By replacing $S_{\Lambda}^{\frac{1}{2}}f$ instead of $f$ and this fact that
\begin{align*}
\sum_{j\in\Bbb I}v_j^2 \pi_{X_j}\Theta^*_j \Theta_j \pi_{X_j}f&=\sum_{j\in\Bbb I}v_j^2 (\Theta^*_j\pi_{X_j})^* \Theta_j \pi_{X_j}f\\
&=\sum_{j\in\Bbb I}v_j^2(\Lambda_j \pi_{W_j}S_{\Lambda}^{-\frac{1}{2}}\pi_{X_j})^*\Lambda_j \pi_{W_j}S_{\Lambda}^{-\frac{1}{2}}\pi_{X_j}f\\
&=\sum_{j\in\Bbb I}v_j^2 S_{\Lambda}^{-\frac{1}{2}}\pi_{W_j}\Lambda_j^*\Lambda_j \pi_{W_j}S_{\Lambda}^{-\frac{1}{2}}f\\
&=S_{\Lambda}^{-\frac{1}{2}}S_{\Bbb I}S_{\Lambda}^{-\frac{1}{2}}f,
\end{align*}
the proof follows.
\end{proof}
\begin{corollary}\label{cor3}
Let $\Lambda$ be a g-fusion frame with the g-fusion frame operator $S_{\Lambda}$. If $\Bbb I\subseteq\Bbb J$, then
$$0\leq S_{\Bbb I}-S_{\Bbb I}S^{-1}_{\Lambda}S_{\Bbb I}\leq\frac{1}{4}S_{\Lambda}.$$
\end{corollary}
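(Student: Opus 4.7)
The plan is to reduce the statement to Corollary \ref{cor2} by conjugating the operator $S_{\Bbb I}$ with $S_{\Lambda}^{-1/2}$, exactly as in the proof of Theorem \ref{t3.3}. Recall from that proof that setting $\Theta_j := \Lambda_j \pi_{W_j} S_{\Lambda}^{-1/2}$ and $X_j := S_{\Lambda}^{-1/2} W_j$ produces a Parseval g-fusion frame $(X_j, \Theta_j, v_j)$ for $H$, and the associated partial-sum operator is
\begin{align*}
\tilde{S}_{\Bbb I} f := \sum_{j \in \Bbb I} v_j^2 \pi_{X_j}\Theta_j^* \Theta_j \pi_{X_j} f = S_{\Lambda}^{-1/2} S_{\Bbb I} S_{\Lambda}^{-1/2} f.
\end{align*}

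First I would apply Corollary \ref{cor2} to this Parseval g-fusion frame to obtain
\begin{align*}
0 \leq \tilde{S}_{\Bbb I} - \tilde{S}_{\Bbb I}^2 \leq \tfrac{1}{4}\, id_H.
\end{align*}
Next, substituting $\tilde{S}_{\Bbb I} = S_{\Lambda}^{-1/2} S_{\Bbb I} S_{\Lambda}^{-1/2}$ and simplifying $\tilde{S}_{\Bbb I}^2 = S_{\Lambda}^{-1/2} S_{\Bbb I} S_{\Lambda}^{-1} S_{\Bbb I} S_{\Lambda}^{-1/2}$ (using $S_{\Lambda}^{-1/2} S_{\Lambda}^{-1/2} = S_{\Lambda}^{-1}$), I can factor out to rewrite the chain of inequalities as
\begin{align*}
0 \leq S_{\Lambda}^{-1/2}\bigl(S_{\Bbb I} - S_{\Bbb I} S_{\Lambda}^{-1} S_{\Bbb I}\bigr) S_{\Lambda}^{-1/2} \leq \tfrac{1}{4}\, id_H.
\end{align*}

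Finally, since $S_{\Lambda}^{1/2}$ is bounded, positive, and self-adjoint, conjugating the whole display by $S_{\Lambda}^{1/2}$ on both sides preserves the operator ordering and yields
\begin{align*}
0 \leq S_{\Bbb I} - S_{\Bbb I} S_{\Lambda}^{-1} S_{\Bbb I} \leq \tfrac{1}{4}\, S_{\Lambda},
\end{align*}
as desired.

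The argument is essentially mechanical once the correct conjugation is spotted; the only subtlety is the algebraic simplification that turns $\tilde{S}_{\Bbb I}^2$ into $S_{\Lambda}^{-1/2} S_{\Bbb I} S_{\Lambda}^{-1} S_{\Bbb I} S_{\Lambda}^{-1/2}$, and checking that multiplying a two-sided operator inequality by the positive operator $S_{\Lambda}^{1/2}$ on both sides is legitimate. No new identity or inequality from frame theory is needed beyond those already established in Theorem \ref{t3.3} and Corollary \ref{cor2}.
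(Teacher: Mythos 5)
Your proof is correct and follows essentially the same route as the paper: identify $S_{\Lambda}^{-1/2}S_{\Bbb I}S_{\Lambda}^{-1/2}$ with the partial-sum operator of the Parseval g-fusion frame $(X_j,\Theta_j,v_j)$ from Theorem \ref{t3.3}, apply Corollary \ref{cor2}, and conjugate back by $S_{\Lambda}^{1/2}$. You are in fact slightly more explicit than the paper, which leaves the final conjugation step implicit.
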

\begin{proof}
In the proof of Theorem \ref{t3.3}, we showed that
$$\sum_{j\in\Bbb I}v_j^2 \pi_{X_j}\Theta^*_j \Theta_j \pi_{X_j}f=S_{\Lambda}^{-\frac{1}{2}}S_{\Bbb I}S_{\Lambda}^{-\frac{1}{2}}f.$$
By Corollary \ref{cor2} we get
$$0\leq\sum_{j\in\Bbb I}v_j^2 \pi_{X_j}\Theta^*_j \Theta_j \pi_{X_j}f-\big(\sum_{j\in\Bbb I}v_j^2 \pi_{X_j}\Theta^*_j \Theta_j \pi_{X_j}f\big)^2\leq\frac{1}{4}id_H.$$
Therefore,
$$0\leq S_{\Lambda}^{-\frac{1}{2}}(S_{\Bbb I}-S_{\Bbb I}S^{-1}_{\Lambda}S_{\Bbb I})S_{\Lambda}^{-\frac{1}{2}}\leq\frac{1}{4}id_H$$
and the proof is completed.
\end{proof}
\begin{corollary}
Suppose that $\Lambda$ is a g-fusion frame with the g-fusion frame operator $S_{\Lambda}$. If $\Bbb I\subseteq\Bbb J$  and $f\in H$, then
\begin{align*}
\sum_{j\in\Bbb I}v_j^2\Vert\Lambda_j\pi_{W_j}f\Vert^2+\Vert S_{\Lambda}^{-\frac{1}{2}}S_{\Bbb I^c}f\Vert^2\geq\frac{3}{4}\Vert S_{\Lambda}^{-1}\Vert^{-1} \Vert f\Vert^2.
\end{align*} 
\end{corollary}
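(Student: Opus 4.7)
The plan is to reduce the inequality to Corollary \ref{cor1} by passing to the associated Parseval g-fusion frame, exactly as in the proof of Theorem \ref{t3.3}, and then to lower-bound $\Vert S_{\Lambda}^{1/2}f\Vert^2$ in terms of $\Vert f\Vert^2$. Concretely, I would set $\Theta_j:=\Lambda_j\pi_{W_j}S_{\Lambda}^{-1/2}$ and $X_j:=S_{\Lambda}^{-1/2}W_j$, so that $(X_j,\Theta_j,v_j)$ is a Parseval g-fusion frame for $H$, and then apply the inequality of Corollary \ref{cor1} to this Parseval frame after substituting $S_{\Lambda}^{1/2}f$ for $f$. This yields
$$\sum_{j\in\Bbb I}v_j^2\Vert \Theta_j\pi_{X_j}S_{\Lambda}^{1/2}f\Vert^2+\Big\Vert\sum_{j\in\Bbb I^c}v_j^2\pi_{X_j}\Theta_j^*\Theta_j\pi_{X_j}S_{\Lambda}^{1/2}f\Big\Vert^2 \geq \frac{3}{4}\Vert S_{\Lambda}^{1/2}f\Vert^2.$$

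Next, I invoke the identity $\sum_{j\in\Bbb K}v_j^2\pi_{X_j}\Theta_j^*\Theta_j\pi_{X_j}h=S_{\Lambda}^{-1/2}S_{\Bbb K}S_{\Lambda}^{-1/2}h$ established inside the proof of Theorem \ref{t3.3}. Applied with $\Bbb K=\Bbb I^c$ and $h=S_{\Lambda}^{1/2}f$, it converts the second summand on the left directly into $\Vert S_{\Lambda}^{-1/2}S_{\Bbb I^c}f\Vert^2$, matching the target. For the first summand, a short pairing computation using the self-adjointness of $S_{\Lambda}^{\pm 1/2}$ and the relation $S_{\Lambda}^{-1/2}S_{\Lambda}^{1/2}=id_H$ gives
$$\sum_{j\in\Bbb I}v_j^2\Vert \Theta_j\pi_{X_j}S_{\Lambda}^{1/2}f\Vert^2=\langle S_{\Lambda}^{-1/2}S_{\Bbb I}S_{\Lambda}^{-1/2}S_{\Lambda}^{1/2}f,S_{\Lambda}^{1/2}f\rangle=\langle S_{\Bbb I}f,f\rangle=\sum_{j\in\Bbb I}v_j^2\Vert \Lambda_j\pi_{W_j}f\Vert^2.$$

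Finally, since $S_{\Lambda}^{-1}\leq\Vert S_{\Lambda}^{-1}\Vert\,id_H$, we get $S_{\Lambda}\geq\Vert S_{\Lambda}^{-1}\Vert^{-1}id_H$, hence $\Vert S_{\Lambda}^{1/2}f\Vert^2=\langle S_{\Lambda}f,f\rangle\geq\Vert S_{\Lambda}^{-1}\Vert^{-1}\Vert f\Vert^2$, which closes the estimate. The main obstacle is essentially bookkeeping: one must track carefully that the two occurrences of $S_{\Lambda}^{\pm 1/2}$ (in the substitution $f\mapsto S_{\Lambda}^{1/2}f$ and in the conjugation by $S_{\Lambda}^{-1/2}$ inside the Theorem \ref{t3.3} identity) combine so that the first sum collapses to the untransformed quantity $\sum_{j\in\Bbb I}v_j^2\Vert\Lambda_j\pi_{W_j}f\Vert^2$, while the second sum retains exactly one outer factor of $S_{\Lambda}^{-1/2}$. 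Once this is in place, the argument is entirely parallel to the Parseval case treated in Corollary \ref{cor1}.
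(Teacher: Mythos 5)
Your proposal is correct and follows essentially the same route as the paper: both pass to the associated Parseval g-fusion frame $(X_j,\Theta_j,v_j)$, substitute $S_{\Lambda}^{1/2}f$, apply the lower bound of Corollary \ref{cor1} together with the identity $\sum_j v_j^2\pi_{X_j}\Theta_j^*\Theta_j\pi_{X_j}=S_{\Lambda}^{-1/2}S_{\Bbb K}S_{\Lambda}^{-1/2}$ from the proof of Theorem \ref{t3.3}, and finish with $\langle S_{\Lambda}f,f\rangle\geq\Vert S_{\Lambda}^{-1}\Vert^{-1}\Vert f\Vert^2$. Your version merely spells out the bookkeeping (in particular the collapse of the first summand to $\sum_{j\in\Bbb I}v_j^2\Vert\Lambda_j\pi_{W_j}f\Vert^2$) that the paper leaves implicit.
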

\begin{proof}
By Theorem \ref{t3.3} and Corollary \ref{cor1}, we can write
\begin{small}
\begin{align*}
\sum_{j\in\Bbb I}v_j^2\Vert\Lambda_j\pi_{W_j}f\Vert^2+\Vert S_{\Lambda}^{-\frac{1}{2}}S_{\Bbb I^c}f\Vert^2&=\sum_{j\in\Bbb I}v_j^2\Vert \Theta_j \pi_{X_j}S^{\frac{1}{2}}_{\Lambda}f\Vert^2+\Vert \sum_{j\in\Bbb I^c}v_j^2 \pi_{X_j}\Theta^*_j \Theta_j \pi_{X_j}S^{\frac{1}{2}}_{\Lambda}f\Vert^2\\
&\geq\frac{3}{4}\Vert S^{\frac{1}{2}}_{\Lambda}f\Vert^2\\
&=\frac{3}{4}\langle S_{\Lambda}f, f\rangle\\
&\geq\frac{3}{4}\Vert S_{\Lambda}^{-1}\Vert^{-1} \Vert f\Vert^2.
\end{align*}
\end{small}
\end{proof}
\begin{theorem}
Let $\Lambda$ be a Parseval g-fusion frame for $H$ and $\Bbb I\subseteq\Bbb J$. Then
\begin{enumerate}
\item [(I)] $0\leq S_{\Bbb I}-S_{\Bbb I}^2\leq\dfrac{1}{4}id_H$.
\item [(II)] $\dfrac{1}{2}id_H\leq S_{\Bbb I}^2+S_{\Bbb I^c}^2\leq\dfrac{3}{2}id_H$.
\end{enumerate}
\end{theorem}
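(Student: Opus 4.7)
Part (I) is literally Corollary \ref{cor2}, already proved in the excerpt; in the write-up I would simply reference it and move on.

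For Part (II), the plan is to reduce everything to Part (I) via the identity $S_{\Bbb I}+S_{\Bbb I^c}=id_H$. The key observation is that $S_{\Bbb I^c}=id_H-S_{\Bbb I}$, so $S_{\Bbb I}$ and $S_{\Bbb I^c}$ commute. Squaring $id_H = S_{\Bbb I}+S_{\Bbb I^c}$ and using commutativity gives
\[
id_H = S_{\Bbb I}^2 + 2 S_{\Bbb I} S_{\Bbb I^c} + S_{\Bbb I^c}^2,
\]
and since $S_{\Bbb I}S_{\Bbb I^c}=S_{\Bbb I}(id_H-S_{\Bbb I})=S_{\Bbb I}-S_{\Bbb I}^2$, this rearranges to the clean identity
\[
S_{\Bbb I}^2 + S_{\Bbb I^c}^2 = id_H - 2\bigl(S_{\Bbb I}-S_{\Bbb I}^2\bigr).
\]

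With this identity in hand, both bounds of (II) fall out immediately from the bounds of (I). From the upper estimate $S_{\Bbb I}-S_{\Bbb I}^2\le \tfrac14 id_H$ I get $-2(S_{\Bbb I}-S_{\Bbb I}^2)\ge -\tfrac12 id_H$, hence $S_{\Bbb I}^2 + S_{\Bbb I^c}^2 \ge \tfrac12 id_H$. From the lower estimate $S_{\Bbb I}-S_{\Bbb I}^2\ge 0$ I get $S_{\Bbb I}^2 + S_{\Bbb I^c}^2 \le id_H \le \tfrac32 id_H$, which is even slightly stronger than what the statement asks.

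There is essentially no obstacle here; the only subtle point is the use of commutativity when squaring, which comes for free because $S_{\Bbb I^c}$ is a polynomial in $S_{\Bbb I}$. An alternative route would be to apply Lemma \ref{l1} directly to the polynomial $v = 2u^2 - 2u + id_H$ with $u=S_{\Bbb I}$ (noting $\sigma(S_{\Bbb I})\subseteq[0,1]$ by the remark before Theorem \ref{tg1}), but the reduction to Part (I) above is cleaner and keeps the paper's style consistent with Lemma \ref{l2} and Corollary \ref{cor2}.
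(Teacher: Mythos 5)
Your proposal is correct and follows essentially the same route as the paper: both arguments rest on the identity $S_{\Bbb I}^2+S_{\Bbb I^c}^2=id_H-2S_{\Bbb I}S_{\Bbb I^c}=id_H-2(S_{\Bbb I}-S_{\Bbb I}^2)$ combined with the two bounds of part (I), which is indeed just a restatement of Corollary \ref{cor2}. Your upper estimate $S_{\Bbb I}^2+S_{\Bbb I^c}^2\le id_H\le\frac{3}{2}id_H$ is in fact slightly sharper and cleaner than the paper's detour through $id_H+2S_{\Bbb I}-2S_{\Bbb I}^2\le\frac{3}{2}id_H$.
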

\begin{proof}
(I). Since $S_{\Bbb I}+S_{\Bbb I^c}=id_H$, then $S_{\Bbb I}S_{\Bbb I^c}+S^2_{\Bbb I^c}=S_{\Bbb I^c}$. Thus,
$$S_{\Bbb I}S_{\Bbb I^c}=S_{\Bbb I^c}-S^2_{\Bbb I^c}=S_{\Bbb I^c}(id_H-S_{\Bbb I^c})=S_{\Bbb I^c}S_{\Bbb I}.$$
But, $\Lambda$ is a Parseval, so $0\leq S_{\Bbb I}S_{\Bbb I^c}=S_{\Bbb I}-S_{\Bbb I}^2$. On the other hand, by Lemma \ref{l2}, we get
$$S_{\Bbb I}-S_{\Bbb I}^2\leq\frac{1}{4}id_H.$$
(II). We saw that $S_{\Bbb I}S_{\Bbb I^c}=S_{\Bbb I^c}S_{\Bbb I}$, then by Lemma \ref{l2},
$$S_{\Bbb I}^2+S_{\Bbb I^c}^2=id_H-2S_{\Bbb I}S_{\Bbb I^c}=2S_{\Bbb I}^2-2S_{\Bbb I}+id_H\geq\frac{1}{2}id_H.$$
On the other hand, we have again by Lemma \ref{l2} and $0\leq S_{\Bbb I}-S_{\Bbb I}^2$,
$$S_{\Bbb I}^2+S_{\Bbb I^c}^2\leq id_H+2S_{\Bbb I}-2S_{\Bbb I}^2\leq\frac{3}{2}id_H.$$
\end{proof}
\begin{corollary}
Let $\Lambda$ be a g-fusion frame with the g-fusion frame operator $S_{\Lambda}$. If $\Bbb I\subseteq\Bbb J$, then
$$\frac{1}{2}S_{\Lambda}\leq S_{\Bbb I}S^{-1}_{\Lambda}S_{\Bbb I}-S_{\Bbb I^c}S^{-1}_{\Lambda}S_{\Bbb I^c}\leq\frac{3}{2}S_{\Lambda}.$$
\end{corollary}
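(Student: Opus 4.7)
My plan is to deduce this corollary from part (II) of the preceding theorem by the same Parseval reduction used in the proof of Theorem \ref{t3.3} and Corollary \ref{cor3}. I would first pass to the Parseval g-fusion frame $(X_j, \Theta_j, v_j)$ with $X_j := S_\Lambda^{-1/2} W_j$ and $\Theta_j := \Lambda_j \pi_{W_j} S_\Lambda^{-1/2}$. The computation already carried out in the proof of Theorem \ref{t3.3} identifies its partial-sum operator on $\mathbb{I}$ as $T_{\mathbb{I}} := S_\Lambda^{-1/2} S_{\mathbb{I}} S_\Lambda^{-1/2}$, and likewise $T_{\mathbb{I}^c} := S_\Lambda^{-1/2} S_{\mathbb{I}^c} S_\Lambda^{-1/2}$; these are commuting positive operators with $T_{\mathbb{I}} + T_{\mathbb{I}^c} = id_H$.

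Next, I would apply the preceding theorem to this Parseval frame, which gives an operator inequality between $\tfrac{1}{2}id_H$ and $\tfrac{3}{2}id_H$ for a quadratic combination of $T_{\mathbb{I}}$ and $T_{\mathbb{I}^c}$. To match the difference $S_{\mathbb{I}} S_\Lambda^{-1} S_{\mathbb{I}} - S_{\mathbb{I}^c} S_\Lambda^{-1} S_{\mathbb{I}^c}$ that appears in the statement, I would then invoke Lemma \ref{l2}: since $T_{\mathbb{I}} + T_{\mathbb{I}^c} = id_H$, one has $T_{\mathbb{I}}^2 - T_{\mathbb{I}^c}^2 = T_{\mathbb{I}} - T_{\mathbb{I}^c}$, so the difference of squares in the $T$-variables can be expressed linearly, and then recombined with the sum-of-squares estimate furnished by part (II).

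The last step is to conjugate the resulting inequality by $S_\Lambda^{1/2}$ on both sides. Because $C^* A C \leq C^* B C$ whenever $A \leq B$, conjugation by $S_\Lambda^{1/2}$ preserves the operator order, and the identities $S_\Lambda^{1/2} T_{\mathbb{I}}^2 S_\Lambda^{1/2} = S_{\mathbb{I}} S_\Lambda^{-1} S_{\mathbb{I}}$ and $S_\Lambda^{1/2} T_{\mathbb{I}^c}^2 S_\Lambda^{1/2} = S_{\mathbb{I}^c} S_\Lambda^{-1} S_{\mathbb{I}^c}$ turn the middle term into exactly $S_{\mathbb{I}} S_\Lambda^{-1} S_{\mathbb{I}} - S_{\mathbb{I}^c} S_\Lambda^{-1} S_{\mathbb{I}^c}$, while the scalar outer bounds $\tfrac{1}{2}id_H$ and $\tfrac{3}{2}id_H$ become $\tfrac{1}{2}S_\Lambda$ and $\tfrac{3}{2}S_\Lambda$, as required.

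I expect the main obstacle to be precisely this sign issue: part (II) of the preceding theorem estimates the \emph{sum} $S_{\mathbb{I}}^2 + S_{\mathbb{I}^c}^2$, whereas the present corollary is stated for the \emph{difference} $S_{\mathbb{I}} S_\Lambda^{-1} S_{\mathbb{I}} - S_{\mathbb{I}^c} S_\Lambda^{-1} S_{\mathbb{I}^c}$. Bridging the two via Lemma \ref{l2} and the commutativity of $T_{\mathbb{I}}$ and $T_{\mathbb{I}^c}$, and verifying that the $\tfrac{1}{2}$ and $\tfrac{3}{2}$ bounds are retained after this algebraic manipulation (rather than being widened), is the delicate part of the argument; if that passage fails to preserve the constants, one would have to either tighten Lemma \ref{l1} applied to a suitable polynomial in $T_{\mathbb{I}}$, or reinterpret which combination of the two squared operators is intended.
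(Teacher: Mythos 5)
Your overall strategy is exactly the paper's: the paper's entire proof is the sentence ``similar to Corollary \ref{cor3}'', i.e.\ pass to the Parseval g-fusion frame $(X_j,\Theta_j,v_j)$ with $\Theta_j=\Lambda_j\pi_{W_j}S_\Lambda^{-1/2}$, apply part (II) of the preceding theorem to its partial-sum operators $T_{\Bbb I}=S_\Lambda^{-1/2}S_{\Bbb I}S_\Lambda^{-1/2}$, and conjugate by $S_\Lambda^{1/2}$. You have also put your finger on the real problem, but you should push your own observation to its conclusion rather than hedge: the inequality \emph{as printed, with the minus sign, is false}, and no amount of massaging via Lemma \ref{l2} can recover it. Indeed, the expression $S_{\Bbb I}S_\Lambda^{-1}S_{\Bbb I}-S_{\Bbb I^c}S_\Lambda^{-1}S_{\Bbb I^c}$ is antisymmetric under swapping $\Bbb I\leftrightarrow\Bbb I^c$, so if it were $\geq\frac12 S_\Lambda$ for every $\Bbb I$ one could add the inequality for $\Bbb I$ to the one for $\Bbb I^c$ and conclude $0\geq S_\Lambda$; concretely, $\Bbb I=\emptyset$ gives the middle term $-S_\Lambda$, which is not $\geq\frac12 S_\Lambda$. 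Your proposed bridge via Lemma \ref{l2} yields only
\begin{equation*}
S_{\Bbb I}S_\Lambda^{-1}S_{\Bbb I}-S_{\Bbb I^c}S_\Lambda^{-1}S_{\Bbb I^c}
= S_\Lambda^{1/2}\bigl(T_{\Bbb I}^2-T_{\Bbb I^c}^2\bigr)S_\Lambda^{1/2}
= S_\Lambda^{1/2}\bigl(2T_{\Bbb I}-id_H\bigr)S_\Lambda^{1/2}
= 2S_{\Bbb I}-S_\Lambda ,
\end{equation*}
which is bounded between $-S_\Lambda$ and $S_\Lambda$ and nothing better. The statement the paper's method actually proves is the one with a plus sign: conjugating $\frac12 id_H\leq T_{\Bbb I}^2+T_{\Bbb I^c}^2\leq\frac32 id_H$ by $S_\Lambda^{1/2}$ gives
$\frac12 S_\Lambda\leq S_{\Bbb I}S_\Lambda^{-1}S_{\Bbb I}+S_{\Bbb I^c}S_\Lambda^{-1}S_{\Bbb I^c}\leq\frac32 S_\Lambda$
directly, with no detour through Lemma \ref{l2}. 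So your last step (order-preservation of conjugation and the identities $S_\Lambda^{1/2}T_{\Bbb I}^2S_\Lambda^{1/2}=S_{\Bbb I}S_\Lambda^{-1}S_{\Bbb I}$) is correct and is all that is needed; the ``reinterpretation'' you mention at the end is the right resolution, and you should state it as a correction of the sign in the corollary rather than as a contingency.
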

\begin{proof}
The proof is similar to the proof of Corollary \ref{cor3}.
\end{proof}
\begin{theorem}
Let $\Lambda$ be a g-fusion frame with the g-fusion frame operator $S_{\Lambda}$. If $\Bbb I\subseteq\Bbb J$, then for any $f\in H$
\begin{small}
\begin{align*}
\sum_{j\in\Bbb I}v_j^2\Vert\Lambda_j\pi_{W_j}f\Vert^2-\sum_{j\in\Bbb J}v_j^2\Vert\tilde{\Lambda}_j\pi_{\tilde{W}_j}M_{\Bbb I}f\Vert^2=\sum_{j\in\Bbb I^c}v_j^2\Vert\Lambda_j\pi_{W_j}f\Vert^2-\sum_{j\in\Bbb J}v_j^2\Vert\tilde{\Lambda}_j\pi_{\tilde{W}_j}M_{\Bbb I^c}f\Vert^2
\end{align*}
\end{small}
where
\begin{align*}
M_{\Bbb I}f=\sum_{j\in\Bbb I}v_j^2\pi_{W_j}\Lambda^*_j\Lambda_j\pi_{W_j}f.
\end{align*}
\end{theorem}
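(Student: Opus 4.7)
The approach is to rewrite both sides of the claimed identity as quadratic forms $\langle\cdot\, f,f\rangle$ and then reduce the problem to an operator identity involving $M_{\Bbb I}$, $M_{\Bbb I^c}$, and $S_{\Lambda}$.

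First I would observe that $M_{\Bbb I}$ is self-adjoint (each summand $v_j^2\pi_{W_j}\Lambda_j^*\Lambda_j\pi_{W_j}$ is positive) and that $M_{\Bbb I}+M_{\Bbb I^c}=S_{\Lambda}$ by the definition of the g-fusion frame operator. Hence $\sum_{j\in\Bbb I}v_j^2\Vert\Lambda_j\pi_{W_j}f\Vert^2=\langle M_{\Bbb I}f,f\rangle$, and applying identity (\ref{inverse}) to the vector $M_{\Bbb I}f$ in place of $f$ yields
\begin{align*}
\sum_{j\in\Bbb J}v_j^2\Vert\tilde{\Lambda}_j\pi_{\tilde{W}_j}M_{\Bbb I}f\Vert^2=\langle S_{\Lambda}^{-1}M_{\Bbb I}f,M_{\Bbb I}f\rangle=\langle M_{\Bbb I}S_{\Lambda}^{-1}M_{\Bbb I}f,f\rangle.
\end{align*}
The left-hand side of the theorem thus equals $\langle(M_{\Bbb I}-M_{\Bbb I}S_{\Lambda}^{-1}M_{\Bbb I})f,f\rangle$, and the right-hand side similarly equals $\langle(M_{\Bbb I^c}-M_{\Bbb I^c}S_{\Lambda}^{-1}M_{\Bbb I^c})f,f\rangle$. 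It therefore suffices to prove the operator identity $M_{\Bbb I}-M_{\Bbb I}S_{\Lambda}^{-1}M_{\Bbb I}=M_{\Bbb I^c}-M_{\Bbb I^c}S_{\Lambda}^{-1}M_{\Bbb I^c}$.

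For this, I would use $S_{\Lambda}=M_{\Bbb I}+M_{\Bbb I^c}$ and factor on the right, getting $M_{\Bbb I}-M_{\Bbb I}S_{\Lambda}^{-1}M_{\Bbb I}=M_{\Bbb I}S_{\Lambda}^{-1}(S_{\Lambda}-M_{\Bbb I})=M_{\Bbb I}S_{\Lambda}^{-1}M_{\Bbb I^c}$, and symmetrically $M_{\Bbb I^c}-M_{\Bbb I^c}S_{\Lambda}^{-1}M_{\Bbb I^c}=M_{\Bbb I^c}S_{\Lambda}^{-1}M_{\Bbb I}$. The two products $M_{\Bbb I}S_{\Lambda}^{-1}M_{\Bbb I^c}$ and $M_{\Bbb I^c}S_{\Lambda}^{-1}M_{\Bbb I}$ are adjoints of one another, but by the preceding two lines each coincides with a manifestly self-adjoint operator; hence both products are self-adjoint and the two expressions agree.

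A cleaner variant, paralleling Theorem \ref{t3.3}, is to conjugate by $S_{\Lambda}^{-1/2}$: set $u:=S_{\Lambda}^{-1/2}M_{\Bbb I}S_{\Lambda}^{-1/2}$ and $v:=S_{\Lambda}^{-1/2}M_{\Bbb I^c}S_{\Lambda}^{-1/2}$. Then $u+v=id_H$, and Lemma \ref{l2} immediately gives $u-u^2=v-v^2$, which on conjugation by $S_{\Lambda}^{1/2}$ produces exactly the required operator identity. The only mildly delicate step is the invocation of (\ref{inverse}) at the shifted vector $M_{\Bbb I}f$; once that reduction is made, nothing genuinely obstructive remains and the rest is routine operator algebra.
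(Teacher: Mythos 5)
Your proposal is correct and follows essentially the same route as the paper: both reduce the claim, via $M_{\Bbb I}+M_{\Bbb I^c}=S_{\Lambda}$ and identity (\ref{inverse}) applied at the shifted vector $M_{\Bbb I}f$, to the operator identity $M_{\Bbb I}-M_{\Bbb I}S_{\Lambda}^{-1}M_{\Bbb I}=M_{\Bbb I^c}-M_{\Bbb I^c}S_{\Lambda}^{-1}M_{\Bbb I^c}$. The paper establishes that identity by applying Lemma \ref{l2} to $u=S_{\Lambda}^{-1}M_{\Bbb I}$, $v=S_{\Lambda}^{-1}M_{\Bbb I^c}$ and then pairing with $g=S_{\Lambda}f$, which is just the unconjugated form of your $S_{\Lambda}^{-1/2}$-variant; your direct factorization through $M_{\Bbb I}S_{\Lambda}^{-1}M_{\Bbb I^c}$ and its adjoint is an equally valid micro-variation of the same algebra.
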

\begin{proof}
Via the definition of $S_{\Lambda}$, it is clear that $M_{\Bbb I}+M_{\Bbb I^c}=S_{\Lambda}$. Therefore, $S^{-1}_{\Lambda}M_{\Bbb I}+S^{-1}_{\Lambda}M_{\Bbb I^c}=id_H$. Hence, by Lemma \ref{l2}
$$S^{-1}_{\Lambda}M_{\Bbb I}-S^{-1}_{\Lambda}M_{\Bbb I^c}=(S^{-1}_{\Lambda}M_{\Bbb I})^2-(S^{-1}_{\Lambda}M_{\Bbb I^c})^2.$$
Thus, for each $f,g\in H$ we obtain
\begin{equation*}
\langle S^{-1}_{\Lambda}M_{\Bbb I}f, g\rangle-\langle S^{-1}_{\Lambda}M_{\Bbb I}S^{-1}_{\Lambda}M_{\Bbb I}f, g\rangle=\langle S^{-1}_{\Lambda}M_{\Bbb I^c}f, g\rangle-\langle S^{-1}_{\Lambda}M_{\Bbb I^c}S^{-1}_{\Lambda}M_{\Bbb I^c}f, g\rangle.
\end{equation*}
We choose $g$ to be $g=S_{\Lambda}f$ and we can get
\begin{equation*}
\langle M_{\Bbb I}f, f\rangle-\langle S^{-1}_{\Lambda}M_{\Bbb I}f, M_{\Bbb I}f\rangle=\langle M_{\Bbb I^c}f, f\rangle-\langle S^{-1}_{\Lambda}M_{\Bbb I^c}f, M_{\Bbb I^c}f\rangle
\end{equation*}
and by (\ref{inverse}),  the proof is completed.
\end{proof}

\end{document}